\newtheorem{theorem}{Theorem}[section]
\newtheorem{conjecture}[theorem]{Conjecture}
\newtheorem{corollary}[theorem] {Corollary}
\newtheorem{definition}[theorem]{Definition}
\newtheorem{example}[theorem]{Example}
\newtheorem{problem}[theorem]{Problem}
\newtheorem{question}[theorem]{Question}
\title{This is the title}
\begin{document}
\hrule\hrule\hrule\hrule\hrule
\vspace{0.3cm}	
\begin{center}
{\bf{FUNCTIONAL CONTINUOUS  UNCERTAINTY PRINCIPLE}}\\
\vspace{0.3cm}
\hrule\hrule\hrule\hrule\hrule
\vspace{0.3cm}
\textbf{K. MAHESH KRISHNA}\\
Post Doctoral Fellow \\
Statistics and Mathematics Unit\\
Indian Statistical Institute, Bangalore Centre\\
Karnataka 560 059, India\\
Email: kmaheshak@gmail.com\\

Date: \today
\end{center}

\hrule\hrule
\vspace{0.5cm}
\textbf{Abstract}: Let $(\Omega, \mu)$,  $(\Delta, \nu)$ be   measure spaces. Let  $(\{f_\alpha\}_{\alpha\in \Omega}, \{\tau_\alpha\}_{\alpha\in \Omega})$  and   $(\{g_\beta\}_{\beta\in \Delta}, \{\omega_\beta\}_{\beta\in \Delta})$   be	continuous p-Schauder frames  for a Banach space $\mathcal{X}$. Then for every $x \in \mathcal{X}\setminus\{0\}$,  we show that 
\begin{align}\label{CUE}
	\mu(\operatorname{supp}(\theta_f x))^\frac{1}{p}	\nu(\operatorname{supp}(\theta_g x))^\frac{1}{q} \geq 	\frac{1}{\displaystyle\sup_{\alpha \in \Omega, \beta \in \Delta}|f_\alpha(\omega_\beta)|}, \quad  	\nu(\operatorname{supp}(\theta_g x))^\frac{1}{p}	\mu(\operatorname{supp}(\theta_f x))^\frac{1}{q}\geq \frac{1}{\displaystyle\sup_{\alpha \in \Omega , \beta \in \Delta}|g_\beta(\tau_\alpha)|}.
\end{align}
where 
\begin{align*}
	&\theta_f: \mathcal{X} \ni x \mapsto \theta_fx \in \mathcal{L}^p(\Omega, \mu); \quad   \theta_fx: \Omega \ni \alpha \mapsto  (\theta_fx) (\alpha):= f_\alpha (x) \in \mathbb{K},\\
	&\theta_g: \mathcal{X} \ni x \mapsto \theta_gx \in \mathcal{L}^p(\Delta, \nu); \quad   \theta_gx: \Delta \ni \beta \mapsto  (\theta_gx) (\beta):= g_\beta (x) \in \mathbb{K}
\end{align*}
and $q$ is the conjugate index of $p$. We call Inequality (\ref{CUE}) as \textbf{Functional Continuous  Uncertainty Principle}. It improves the Functional Donoho-Stark-Elad-Bruckstein-Ricaud-Torr\'{e}sani Uncertainty Principle obtained by K. Mahesh  Krishna in \textit{[arXiv:2304.03324v1 [math.FA], 5 April 2023]}.   It also answers a question asked by Prof. Philip B. Stark  to the author. Based on Donoho-Elad Sparsity Theorem, we formulate Measure Minimization Conjecture.

\textbf{Keywords}:   Uncertainty Principle, Parseval Frame, Banach space.

\textbf{Mathematics Subject Classification (2020)}: 42C15.\\

\hrule

\tableofcontents
\hrule
\section{Introduction}
For $h \in \mathbb{C}^d$, let $\|h\|_0$ be the number of nonzero entries in $h$. Let $\hat{}: \mathbb{C}^d \to  \mathbb{C}^d$ be the Fourier transform. Great and surprising inequality of Donoho and Stark  which improved our life is the following.
\begin{theorem} (\textbf{Donoho-Stark Uncertainty Principle}) \cite{DONOHOSTARK} \label{DS}
	For every $d\in \mathbb{N}$, 
	\begin{align}\label{DSE}
  \left(\frac{\|h\|_0+\|\widehat{h}\|_0}{2}\right)^2	\geq \|h\|_0\|\widehat{h}\|_0	\geq d, \quad \forall h \in \mathbb{C}^d\setminus \{0\}.
	\end{align}
\end{theorem}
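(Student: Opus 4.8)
The plan is to reduce everything to a short chain of elementary norm inequalities. The left-hand inequality in \eqref{DSE}, namely $\big(\frac{\|h\|_0+\|\widehat h\|_0}{2}\big)^2 \ge \|h\|_0\|\widehat h\|_0$, is nothing but the AM--GM inequality applied to the two nonnegative reals $\|h\|_0$ and $\|\widehat h\|_0$, so the entire content is the right-hand inequality $\|h\|_0\|\widehat h\|_0 \ge d$. Fix $h\in\mathbb{C}^d\setminus\{0\}$ and put $T:=\operatorname{supp}(h)$, $W:=\operatorname{supp}(\widehat h)$, so that $\|h\|_0=\abs{T}$ and $\|\widehat h\|_0=\abs{W}$; since the Fourier transform is invertible, $\widehat h\ne 0$ and both $\abs{T},\abs{W}$ are positive. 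I would normalize the transform as $\widehat h(k)=\sum_{j}h(j)e^{-2\pi i jk/d}$, so that the inversion formula is $h(j)=\frac1d\sum_k\widehat h(k)e^{2\pi i jk/d}$ and Plancherel's identity reads $\|\widehat h\|_2^2=d\,\|h\|_2^2$.

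The one idea driving the proof is the \emph{flatness} of the Fourier kernel: every entry $e^{\pm 2\pi i jk/d}$ has modulus $1$. First this gives $\|\widehat h\|_\infty\le\|h\|_1$, since $\abs{\widehat h(k)}\le\sum_{j\in T}\abs{h(j)}$ for every $k$. Next, because $h$ is supported on $T$, the Cauchy--Schwarz inequality yields $\|h\|_1\le\abs{T}^{1/2}\|h\|_2$, and Plancherel converts this into $\|\widehat h\|_\infty\le\abs{T}^{1/2}d^{-1/2}\|\widehat h\|_2$. Finally, because $\widehat h$ is supported on $W$, a second application of Cauchy--Schwarz gives $\|\widehat h\|_2\le\abs{W}^{1/2}\|\widehat h\|_\infty$, so that
\begin{equation*}
\|\widehat h\|_\infty\le\abs{T}^{1/2}d^{-1/2}\abs{W}^{1/2}\|\widehat h\|_\infty.
\end{equation*}
Since $h\ne 0$ forces $\|\widehat h\|_\infty>0$, I can cancel it and rearrange to obtain $\abs{T}\,\abs{W}\ge d$, which is exactly $\|h\|_0\|\widehat h\|_0\ge d$.

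I do not expect a genuine obstacle here: once the normalization is fixed, the argument is a three-line string of Cauchy--Schwarz estimates glued together by Plancherel, and the only thing to watch is keeping the factors of $d$ consistent between the inversion formula and Plancherel's identity (a different normalization of the transform simply shuffles the $d$ between the two steps without changing the conclusion). It is worth recording that the bound is sharp: if $d=mn$ and $h$ is the indicator function of the order-$n$ subgroup of $\mathbb{Z}_d$, then $\widehat h$ is a scalar multiple of the indicator of the order-$m$ subgroup, whence $\|h\|_0\|\widehat h\|_0=nm=d$; this also explains why the inequality cannot be upgraded to $\|h\|_0+\|\widehat h\|_0\ge 2\sqrt d$ in general, but only when $d$ is prime.
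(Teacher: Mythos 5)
Your argument for Theorem \ref{DS} is correct, and there is nothing in the paper to compare it against: the paper only states this result with a citation to \cite{DONOHOSTARK} and never proves it. What you give is the standard proof --- the flatness bound $\|\widehat h\|_\infty\le\|h\|_1$, Cauchy--Schwarz on $\operatorname{supp}(h)$ and on $\operatorname{supp}(\widehat h)$, and Plancherel to glue them, with AM--GM supplying the left-hand inequality --- and all the factors of $d$ are consistent with your chosen normalization. It is worth noticing that this is exactly the pattern the paper abstracts in its proof of Theorem \ref{CFDS}: expand $x$ through the second reconstruction formula, pull out the supremum of the cross-terms $|f_\alpha(\omega_\beta)|$, and apply H\"{o}lder over the two supports; indeed Theorem \ref{DS} is the special case of Theorem \ref{RT} (hence of Theorem \ref{CFDS} with counting measures) in which $\tau_j$ is the standard basis and $\omega_k$ the normalized Fourier characters, so that $\max_{j,k}|\langle\tau_j,\omega_k\rangle|=d^{-1/2}$. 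One slip in your closing aside: the additive bound $\|h\|_0+\|\widehat h\|_0\ge 2\sqrt d$ does hold for all $d$ and all $h\neq 0$ --- it is an immediate consequence of the two displayed inequalities you just proved (AM--GM plus the product bound), and your subgroup example satisfies $n+m\ge 2\sqrt{nm}$. What is special to prime $d$ is Tao's much stronger improvement $\|h\|_0+\|\widehat h\|_0\ge d+1$, referred to in the paper via \cite{TAO}; your sharpness example shows only that the multiplicative bound $\|h\|_0\|\widehat h\|_0\ge d$ is attained when $d$ is composite, not that the $2\sqrt d$ bound fails. This does not affect the proof of the stated theorem.
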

In 2002, Elad and Bruckstein extended  Inequality (\ref{DSE})  to pairs of orthonormal bases \cite{ELADBRUCKSTEIN}.  Given a collection $\{\tau_j\}_{j=1}^n$ in a finite dimensional Hilbert space $\mathcal{H}$ over $\mathbb{K}$ ($\mathbb{R}$ or $\mathbb{C}$), we define 
\begin{align*}
	\theta_\tau: \mathcal{H} \ni h \mapsto \theta_\tau h \coloneqq (\langle h, \tau_j\rangle)_{j=1}^n \in \mathbb{K} ^n.
\end{align*}
\begin{theorem} (\textbf{Elad-Bruckstein Uncertainty Principle}) \cite{ELADBRUCKSTEIN} \label{EB}
	Let $\{\tau_j\}_{j=1}^n$,  $\{\omega_j\}_{j=1}^n$ be two orthonormal bases for a  finite dimensional Hilbert space $\mathcal{H}$. Then 
	\begin{align*}
	\left(\frac{\|\theta_\tau h\|_0+\|\theta_\omega h\|_0}{2}\right)^2	\geq \|\theta_\tau h\|_0\|\theta_\omega h\|_0\geq \frac{1}{\displaystyle\max_{1\leq j, k \leq n}|\langle\tau_j, \omega_k \rangle|^2}, \quad \forall h \in \mathcal{H}\setminus \{0\}.
	\end{align*}
\end{theorem}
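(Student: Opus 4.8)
The plan is to prove the two displayed inequalities separately. The left one, $\big(\tfrac{\|\theta_\tau h\|_0+\|\theta_\omega h\|_0}{2}\big)^2\ge \|\theta_\tau h\|_0\,\|\theta_\omega h\|_0$, is merely the elementary inequality $\big(\tfrac{a+b}{2}\big)^2\ge ab$ (equivalently $(a-b)^2\ge 0$) applied to the two real numbers $a=\|\theta_\tau h\|_0$ and $b=\|\theta_\omega h\|_0$, so it carries no content and I would dispose of it in one line. All the work is in the right-hand inequality $\|\theta_\tau h\|_0\,\|\theta_\omega h\|_0\ge \big(\max_{1\le j,k\le n}|\langle\tau_j,\omega_k\rangle|\big)^{-2}$, which I would establish by a two-sided squeeze that plays one orthonormal basis against the other.

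Fix $h\in\mathcal H\setminus\{0\}$, put $M\coloneqq \max_{1\le j,k\le n}|\langle\tau_j,\omega_k\rangle|$, and let $S_\tau=\{j:\langle h,\tau_j\rangle\ne 0\}$ and $S_\omega=\{k:\langle h,\omega_k\rangle\ne 0\}$, so that $|S_\tau|=\|\theta_\tau h\|_0$ and $|S_\omega|=\|\theta_\omega h\|_0$ (and $\|h\|>0$, $M>0$, the latter because the $\omega_k$ span $\mathcal H$). First I would expand $h$ in the $\tau$-basis, $h=\sum_{j\in S_\tau}\langle h,\tau_j\rangle\tau_j$, and read off each $\omega$-coordinate:
\[
\langle h,\omega_k\rangle=\sum_{j\in S_\tau}\langle h,\tau_j\rangle\,\langle\tau_j,\omega_k\rangle,\qquad 1\le k\le n.
\]
Then I would estimate, using the triangle inequality, the bound $|\langle\tau_j,\omega_k\rangle|\le M$, the Cauchy--Schwarz inequality over the index set $S_\tau$, and Parseval's identity for $\{\tau_j\}_{j=1}^n$:
\[
|\langle h,\omega_k\rangle|\le M\sum_{j\in S_\tau}|\langle h,\tau_j\rangle|\le M\,|S_\tau|^{1/2}\Big(\sum_{j\in S_\tau}|\langle h,\tau_j\rangle|^2\Big)^{1/2}=M\,\|\theta_\tau h\|_0^{1/2}\,\|h\|,
\]
whence $\max_{1\le k\le n}|\langle h,\omega_k\rangle|\le M\,\|\theta_\tau h\|_0^{1/2}\,\|h\|$.

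To close the loop I would feed this back into Parseval's identity for the $\omega$-basis, restricting the sum to $S_\omega$:
\[
\|h\|^2=\sum_{k\in S_\omega}|\langle h,\omega_k\rangle|^2\le |S_\omega|\cdot\Big(\max_{1\le k\le n}|\langle h,\omega_k\rangle|\Big)^2\le \|\theta_\omega h\|_0\,M^2\,\|\theta_\tau h\|_0\,\|h\|^2.
\]
Dividing by $\|h\|^2>0$ gives $1\le M^2\,\|\theta_\tau h\|_0\,\|\theta_\omega h\|_0$, which is exactly the right-hand inequality; together with the AM--GM step this proves the theorem.

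I do not anticipate a genuine obstacle --- the proof is short --- but the one move that actually does the work is the Cauchy--Schwarz step converting the $\ell^1$-type sum $\sum_{j\in S_\tau}|\langle h,\tau_j\rangle|$ into $\|\theta_\tau h\|_0^{1/2}\|h\|$, since that is what brings the support size into the estimate; the two appearances of Parseval's identity and the bound $|\langle\tau_j,\omega_k\rangle|\le M$ are what let the factor $\|h\|$ cancel. The roles of $\tau$ and $\omega$ are interchangeable, matching the symmetry of the bound, and one could equally start from the $\omega$-expansion. Specializing to $\{\tau_j\}$ the standard basis of $\mathbb C^d$ and $\{\omega_j\}$ the normalized Fourier basis, for which $M=d^{-1/2}$, recovers Theorem~\ref{DS}.
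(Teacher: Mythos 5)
Your proof is correct, and it is essentially the same argument the paper uses to prove its general Theorem \ref{CFDS} (of which the cited Theorem \ref{EB} is the special case $p=q=2$ with counting measure): expand $h$ in one orthonormal basis, bound the cross terms by $\max_{1\leq j,k\leq n}|\langle\tau_j,\omega_k\rangle|$, apply Cauchy--Schwarz over the support, and invoke Parseval for both bases, the AM--GM half being trivial. The only cosmetic difference is the order in which you play the two bases against each other, which is immaterial by the symmetry of the bound.
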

 In 2013, Ricaud and Torr\'{e}sani  showed that orthonormal bases in Theorem \ref{EB} can be improved to Parseval frames \cite{RICAUDTORRESANI}.
\begin{theorem} (\textbf{Ricaud-Torr\'{e}sani Uncertainty Principle}) \cite{RICAUDTORRESANI} \label{RT}
	Let $\{\tau_j\}_{j=1}^n$,  $\{\omega_j\}_{j=1}^n$ be two Parseval frames   for a  finite dimensional Hilbert space $\mathcal{H}$. Then 
\begin{align*}
	\left(\frac{\|\theta_\tau h\|_0+\|\theta_\omega h\|_0}{2}\right)^2	\geq \|\theta_\tau h\|_0\|\theta_\omega h\|_0\geq \frac{1}{\displaystyle\max_{1\leq j, k \leq n}|\langle\tau_j, \omega_k \rangle|^2}, \quad \forall h \in \mathcal{H}\setminus \{0\}.
		\end{align*}	
\end{theorem}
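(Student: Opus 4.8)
\section*{Proof proposal}

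The plan is to establish the substantive right-hand inequality
$\|\theta_\tau h\|_0\,\|\theta_\omega h\|_0 \ge 1/\max_{1\le j,k\le n}|\langle \tau_j,\omega_k\rangle|^2$;
the left-hand inequality $\left(\tfrac{a+b}{2}\right)^2\ge ab$ is just the AM--GM inequality applied with $a=\|\theta_\tau h\|_0$ and $b=\|\theta_\omega h\|_0$, and uses nothing about the frames.

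Fix $h\in\mathcal H\setminus\{0\}$, set $S_\tau\coloneqq\operatorname{supp}(\theta_\tau h)$, $S_\omega\coloneqq\operatorname{supp}(\theta_\omega h)$, and write $M\coloneqq\max_{1\le j,k\le n}|\langle\tau_j,\omega_k\rangle|$. The first step is to record the two properties of a Parseval frame that drive the argument: the reconstruction identity $h=\sum_{j=1}^n\langle h,\tau_j\rangle\tau_j$ and the norm identity $\sum_{j=1}^n|\langle h,\tau_j\rangle|^2=\|h\|^2$ (and the analogous pair for $\{\omega_k\}_{k=1}^n$). Substituting the $\tau$-reconstruction of $h$ into the inner product $\langle h,\omega_k\rangle$ produces the change-of-frame relation $\langle h,\omega_k\rangle=\sum_{j\in S_\tau}\langle h,\tau_j\rangle\langle\tau_j,\omega_k\rangle$, where the summation has collapsed onto $S_\tau$ precisely because $\langle h,\tau_j\rangle=0$ for $j\notin S_\tau$.

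The second step is a two-line estimate. From the change-of-frame relation, $|\langle h,\omega_k\rangle|\le M\sum_{j\in S_\tau}|\langle h,\tau_j\rangle|\le M\,|S_\tau|^{1/2}\left(\sum_{j\in S_\tau}|\langle h,\tau_j\rangle|^2\right)^{1/2}=M\,|S_\tau|^{1/2}\,\|h\|$, where the middle inequality is Cauchy--Schwarz on the finite index set $S_\tau$ and the final equality uses that $S_\tau$ carries all the nonzero coefficients together with the Parseval norm identity. Squaring and summing over $k\in S_\omega$ then gives $\|h\|^2=\sum_{k\in S_\omega}|\langle h,\omega_k\rangle|^2\le M^2\,|S_\tau|\,|S_\omega|\,\|h\|^2$, the left equality being the Parseval norm identity for $\{\omega_k\}$ (again using that $\langle h,\omega_k\rangle=0$ off $S_\omega$). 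Since $h\ne 0$ we have $\|h\|^2>0$, so dividing through yields $1\le M^2\,|S_\tau|\,|S_\omega|=M^2\,\|\theta_\tau h\|_0\,\|\theta_\omega h\|_0$, which is exactly the desired bound; the quantity is symmetric in $\tau$ and $\omega$ since $|\langle\tau_j,\omega_k\rangle|=|\langle\omega_k,\tau_j\rangle|$, so no separate argument is needed.

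I do not anticipate a genuine obstacle; the one point deserving attention is that the Parseval hypothesis is used twice and in two genuinely different roles — once as a synthesis/reconstruction formula (to rewrite $h$ through the $\tau$-system before pairing against $\omega_k$) and once as the isometry $\|\theta_\tau h\|_2=\|h\|$ — and that both roles are needed for each of the two frames. Replacing ``Parseval frame'' by ``frame'' would force the lower and upper frame bounds into the estimate and the constant $1$ in the numerator would be replaced by a product of frame bounds; pinning down exactly where that $1$ enters is the conceptual heart of the statement and is the natural point of departure for the continuous, Banach-space generalisation pursued later in the paper, where sums become integrals, Cauchy--Schwarz is replaced by H\"older with conjugate exponents $p,q$, and the isometry of the analysis operator is replaced by the $p$-Schauder frame identity.
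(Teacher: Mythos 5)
Your proof is correct, and since the paper only cites this result (giving no proof of its own), the right comparison is with its proof of Theorem \ref{CFDS}: your argument is exactly the $p=q=2$, counting-measure, Hilbert-space specialization of that proof, with the Parseval reconstruction playing the role of the Schauder-frame expansion, Cauchy--Schwarz playing the role of H\"older, and the Parseval norm identity playing the role of the isometry of $\theta_f$ and $\theta_g$. So it is essentially the same approach, correctly executed (including the AM--GM step for the left-hand inequality).
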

In 2023, author made a major improvement of Theorem \ref{RT}.
\begin{theorem} \label{FDS}(\textbf{Functional Donoho-Stark-Elad-Bruckstein-Ricaud-Torr\'{e}sani Uncertainty Principle}) \cite{KRISHNA1}
	Let $(\{f_j\}_{j=1}^n, \{\tau_j\}_{j=1}^n)$ and $(\{g_k\}_{k=1}^m, \{\omega_k\}_{k=1}^m)$ be  p-Schauder frames  for a finite dimensional Banach space $\mathcal{X}$. Then for every $x \in \mathcal{X}\setminus\{0\}$,  we have 
	\begin{align}\label{FDSUNCE}
		\|\theta_f x\|_0^\frac{1}{p}\|\theta_g x\|_0^\frac{1}{q} \geq 	\frac{1}{\displaystyle\max_{1\leq j\leq n, 1\leq k\leq m}|f_j(\omega_k)|}\quad \text{and} \quad \|\theta_g x\|_0^\frac{1}{p}\|\theta_f x\|_0^\frac{1}{q}\geq \frac{1}{\displaystyle\max_{1\leq j\leq n, 1\leq k\leq m}|g_k(\tau_j)|}.
	\end{align}
\end{theorem}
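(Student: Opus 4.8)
The plan is to mimic the classical Parseval-frame argument behind Theorem \ref{RT}, replacing the $\ell^2$-isometry by the defining isometry $\|\theta_f x\|_p = \|x\| = \|\theta_g x\|_p$ of the two $p$-Schauder frames, and replacing the Cauchy--Schwarz step by H\"older's inequality with conjugate exponents $p,q$. First I would record the two reconstruction identities coming from the frame property, $x = \sum_{j=1}^n f_j(x)\tau_j = \sum_{k=1}^m g_k(x)\omega_k$, and cross them against the functionals: applying $f_j$ to the second expansion gives $f_j(x) = \sum_{k=1}^m g_k(x) f_j(\omega_k)$, and applying $g_k$ to the first gives $g_k(x) = \sum_{j=1}^n f_j(x) g_k(\tau_j)$. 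These are the exact analogues of the Gram-type identity used in the Hilbert-space case.

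For the first inequality, set $M_f := \max_{1\le j\le n,\,1\le k\le m}|f_j(\omega_k)|$ and let $S_g := \operatorname{supp}(\theta_g x)$, so $|S_g| = \|\theta_g x\|_0$. From $f_j(x) = \sum_{k\in S_g} g_k(x) f_j(\omega_k)$ I would bound $|f_j(x)| \le M_f \sum_{k\in S_g}|g_k(x)| = M_f\|\theta_g x\|_1$ for every $j$, hence $\|\theta_f x\|_\infty \le M_f\|\theta_g x\|_1$. Two elementary support estimates then finish the job: H\"older on the set $S_g$ gives $\|\theta_g x\|_1 \le \|\theta_g x\|_0^{1/q}\|\theta_g x\|_p$, while counting nonzero entries gives $\|\theta_f x\|_p \le \|\theta_f x\|_0^{1/p}\|\theta_f x\|_\infty$. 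Chaining these with the isometry $\|\theta_f x\|_p = \|x\| = \|\theta_g x\|_p$ yields
\[
\|x\| = \|\theta_f x\|_p \le \|\theta_f x\|_0^{1/p}\,\|\theta_f x\|_\infty \le M_f\,\|\theta_f x\|_0^{1/p}\,\|\theta_g x\|_0^{1/q}\,\|x\|,
\]
and dividing by $\|x\| \ne 0$ gives $\|\theta_f x\|_0^{1/p}\|\theta_g x\|_0^{1/q} \ge 1/M_f$, which is the first claim.

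The second inequality follows by the symmetric computation: starting instead from $g_k(x) = \sum_{j} f_j(x)g_k(\tau_j)$ and setting $M_g := \max_{j,k}|g_k(\tau_j)|$, the same three steps (functional bound, H\"older with exponents $p,q$, nonzero-entry count) give $\|\theta_g x\|_\infty \le M_g\|\theta_f x\|_0^{1/q}\|\theta_f x\|_p$ together with $\|\theta_g x\|_p \le \|\theta_g x\|_0^{1/p}\|\theta_g x\|_\infty$, so that $\|x\| = \|\theta_g x\|_p \le M_g\,\|\theta_g x\|_0^{1/p}\|\theta_f x\|_0^{1/q}\,\|x\|$, whence $\|\theta_g x\|_0^{1/p}\|\theta_f x\|_0^{1/q}\ge 1/M_g$.

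The one place that requires care---the hard part---is the normalization: the clean right-hand sides $1/M_f$ and $1/M_g$ (with no frame constants) appear only because the $p$-Schauder frame hypothesis supplies the isometric identity $\|\theta_f x\|_p = \|x\| = \|\theta_g x\|_p$, which is used once as a lower bound (through $\|\cdot\|_p \le \|\cdot\|_0^{1/p}\|\cdot\|_\infty$) and once as an upper bound (through $\|\cdot\|_1 \le \|\cdot\|_0^{1/q}\|\cdot\|_p$). I would therefore be careful to invoke the isometry for \emph{both} frames and to apply H\"older on the precise support sets; everything else is routine. Note that the argument never uses finite-dimensionality beyond the finiteness of the index sets, which is exactly what makes the passage to the continuous version (Inequality (\ref{CUE})) natural.
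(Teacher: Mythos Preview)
Your proof is correct and follows essentially the same route as the paper: the paper establishes the continuous version (Theorem~\ref{CFDS}) by exactly these ingredients---cross-applying $f_\alpha$ to the $g$-reconstruction, pulling out $\sup|f_\alpha(\omega_\beta)|$, applying H\"older on $\operatorname{supp}(\theta_g x)$, and using both isometries---then recovers Theorem~\ref{FDS} by taking counting measures. Your presentation via the intermediate inequalities $\|\theta_f x\|_\infty \le M_f\|\theta_g x\|_1$ and $\|\theta_f x\|_p \le \|\theta_f x\|_0^{1/p}\|\theta_f x\|_\infty$ is a slightly different packaging of the same chain, but the argument is the same.
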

By seeing paper \cite{KRISHNA1},  Prof. Philip B. Stark, Department of Statistics, University of California, Berkeley, asked the following question to the author.
\begin{question} (\textbf{Philip B. Stark})\label{SP}
\textbf{What is the infinite dimensional version of Theorem \ref{FDS}?}
\end{question}
In this paper, we derive continuous uncertainty principle for  Banach spaces which contains Theorem \ref{FDS} as a particular case and also answers Question \ref{SP}.

\section{Functional Continuous Uncertainty Principle}
In the paper,   $\mathbb{K}$ denotes $\mathbb{C}$ or $\mathbb{R}$ and $\mathcal{X}$ denotes a  Banach space (need not be finite dimensional) over $\mathbb{K}$. Dual of $\mathcal{X}$ is denoted by $\mathcal{X}^*$. Whenever $1<p<\infty$, $q$ denotes conjugate index of $p$.  We recall the notion of weak integral also known as Pettis integrals \cite{TALAGRAND}. 	Let 	$(\Omega, \mu)$ be a measure space and  $\mathcal{X}$ be a Banach space. A function $f: \Omega \to \mathcal{X}$ is said to be weak integrable or  Pettis integrable if following conditions hold.
\begin{enumerate}[\upshape(i)]
	\item For every $\phi\in \mathcal{X}^*$, the map $\phi f: \Omega \to \mathbb{K}$  is measurable and $\phi f \in \mathcal{L}^1(\Omega, \mu)$.
	\item For every measurable subset $E\subseteq  \Omega$, there exists an (unique) element $x_E \in \mathcal{X}$ such that
	\begin{align*}
		\phi(x_E)= \int\limits_{E}	\phi(f(\alpha)) \, d \mu(\alpha), \quad \forall \phi\in \mathcal{X}^*.
	\end{align*}
	The element $x_E \in \mathcal{X}$ is denoted by $\int_{E}f(\alpha) \, d \mu(\alpha)$. With this notion, we have
	\begin{align*}
		\phi\left(\int\limits_{E}f(\alpha) \, d \mu(\alpha)\right)= \int\limits_{E}	\phi(f(\alpha)) \, d \mu(\alpha), \quad \forall \phi\in \mathcal{X}^*, \forall 	E\subseteq  \Omega.
	\end{align*}
\end{enumerate}
We need the following continuous version of  p-Schauder frames  \cite{KRISHNA1}.
 \begin{definition}\label{CPSF}
	Let 	$(\Omega, \mu)$ be a measure space. Let    $\{\tau_\alpha\}_{\alpha\in \Omega}$ be a collection in a Banach   space $\mathcal{X}$ and     $\{f_\alpha\}_{\alpha\in \Omega}$ be a collection in  $\mathcal{X}^*$. The pair $(\{f_\alpha\}_{\alpha\in \Omega}, \{\tau_\alpha\}_{\alpha\in \Omega})$   is said to be a \textbf{continuous p-Schauder frame}  for $\mathcal{X}$   ($1<p<\infty$) if the following holds. 	
	\begin{enumerate}[\upshape(i)]
		\item For every $x\in \mathcal{X}$, the map $\Omega \ni \alpha \mapsto f_\alpha(x)\in \mathbb{K}$ 	is measurable.
		\item For every $x \in \mathcal{X}$, 
		\begin{align*}
			\|x\|^p=\int\limits_{\Omega}|f_\alpha(x)|^p\, d \mu(\alpha).
		\end{align*}
		\item For every $x\in \mathcal{X}$, the map 
		$
			\Omega \ni \alpha \mapsto f_\alpha(x)\tau_\alpha \in \mathcal{X}$
		is weakly measurable.
		 	\item For every $x \in \mathcal{X}$, 
		\begin{align*}
			x=\int\limits_{\Omega}	f_\alpha (x)\tau_\alpha \, d \mu(\alpha),
		\end{align*}  
	where the 	integral is weak integral.
	\end{enumerate}
\end{definition}
As an example,   all group-frames for Banach spaces are continuous p-Schauder frames (see \cite{KRISHNA4}). We can produce more examples using Hilbert space continuous frames.
\begin{example}
Let  $ \{\tau_\alpha\}_{\alpha\in \Omega}$   be a Parseval continuous frame for a Hilbert space $\mathcal{H}$. Let    $ \{\omega_\alpha\}_{\alpha\in \Omega}$ 	   be a continuous frame  $\mathcal{H}$ which is a dual of $ \{\tau_\alpha\}_{\alpha\in \Omega}$. Note that $ \{\omega_\alpha\}_{\alpha\in \Omega}$ need not be Parseval. Then $(\{\omega_\alpha\}_{\alpha\in \Omega}, \{\tau_\alpha\}_{\alpha\in \Omega})$   is a  continuous 2-Schauder frame  for $\mathcal{H}$.
\end{example}
We note that condition (i) in Definition \ref{CPSF} says that the map 
\begin{align*}
\theta_f: \mathcal{X} \ni x \mapsto \theta_fx \in \mathcal{L}^p(\Omega, \mu); \quad   \theta_fx: \Omega \ni \alpha \mapsto  (\theta_fx) (\alpha)\coloneqq f_\alpha (x) \in \mathbb{K}
\end{align*}
is a linear isometry. 
Following is the fundamental result of this paper. 
\begin{theorem}(\textbf{Functional Continuous  Uncertainty Principle})\label{CFDS}
Let $(\Omega, \mu)$,  $(\Delta, \nu)$ be   measure spaces. Let  $(\{f_\alpha\}_{\alpha\in \Omega}, \{\tau_\alpha\}_{\alpha\in \Omega})$  and   $(\{g_\beta\}_{\beta\in \Delta}, \{\omega_\beta\}_{\beta\in \Delta})$   be	continuous p-Schauder frames  for a Banach space $\mathcal{X}$. Then for every $x \in \mathcal{X}\setminus\{0\}$,  we have 
\begin{align}\label{CUP}
		\mu(\operatorname{supp}(\theta_f x))^\frac{1}{p}	\nu(\operatorname{supp}(\theta_g x))^\frac{1}{q} \geq 	\frac{1}{\displaystyle\sup_{\alpha \in \Omega, \beta \in \Delta}|f_\alpha(\omega_\beta)|}, \quad  	\nu(\operatorname{supp}(\theta_g x))^\frac{1}{p}	\mu(\operatorname{supp}(\theta_f x))^\frac{1}{q}\geq \frac{1}{\displaystyle\sup_{\alpha \in \Omega , \beta \in \Delta}|g_\beta(\tau_\alpha)|}.
\end{align}
\end{theorem}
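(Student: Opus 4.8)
The plan is to work directly from the two defining properties of a continuous $p$-Schauder frame — the isometry $\|x\|^p=\int|f_\alpha(x)|^p\,d\mu$ and the Pettis reconstruction formula $x=\int f_\alpha(x)\tau_\alpha\,d\mu$ — together with a single application of H\"older's inequality. Write $S\coloneqq\operatorname{supp}(\theta_f x)$, $T\coloneqq\operatorname{supp}(\theta_g x)$ and $M\coloneqq\sup_{\alpha\in\Omega,\beta\in\Delta}|f_\alpha(\omega_\beta)|$. First I would dispose of the degenerate cases: if $M=\infty$ the first inequality in \eqref{CUP} is vacuous; if $\mu(S)=0$ then the isometry property forces $\|x\|^p=\int_S|f_\alpha(x)|^p\,d\mu=0$, contradicting $x\neq 0$, so $\mu(S)>0$, and likewise $\nu(T)>0$; and if either measure equals $+\infty$ the inequality again holds trivially because $\nu(T),\mu(S)>0$. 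Hence it suffices to treat the case $0<\mu(S),\nu(T)<\infty$ and $0<M<\infty$.

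The core computation is as follows. Apply the reconstruction formula for the frame $(\{g_\beta\}_{\beta\in\Delta},\{\omega_\beta\}_{\beta\in\Delta})$, namely $x=\int_\Delta g_\beta(x)\omega_\beta\,d\nu(\beta)=\int_T g_\beta(x)\omega_\beta\,d\nu(\beta)$ (the integrand vanishes $\nu$-a.e.\ off $T$), and then apply the functional $f_\alpha\in\mathcal{X}^*$. By the defining property of the weak (Pettis) integral this gives, for each $\alpha\in\Omega$,
$$f_\alpha(x)=f_\alpha\left(\int_T g_\beta(x)\omega_\beta\,d\nu(\beta)\right)=\int_T g_\beta(x)\,f_\alpha(\omega_\beta)\,d\nu(\beta).$$
Now H\"older's inequality with exponents $p$ and $q$ yields
$$|f_\alpha(x)|\leq\left(\int_T|g_\beta(x)|^p\,d\nu(\beta)\right)^{1/p}\left(\int_T|f_\alpha(\omega_\beta)|^q\,d\nu(\beta)\right)^{1/q}\leq\|x\|\,M\,\nu(T)^{1/q},$$
using $\int_T|g_\beta(x)|^p\,d\nu=\int_\Delta|g_\beta(x)|^p\,d\nu=\|x\|^p$ (isometry for the $g$-frame) and $|f_\alpha(\omega_\beta)|\leq M$ on $T$. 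Note that the right-hand bound is a constant independent of $\alpha$. Raising to the $p$-th power, integrating over $S$, and invoking the isometry for the $f$-frame,
$$\|x\|^p=\int_S|f_\alpha(x)|^p\,d\mu(\alpha)\leq\|x\|^p\,M^p\,\nu(T)^{p/q}\,\mu(S).$$
Dividing by $\|x\|^p>0$ and taking $p$-th roots gives $\mu(S)^{1/p}\nu(T)^{1/q}\geq 1/M$, which is exactly the first inequality in \eqref{CUP}.

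The second inequality follows by the same argument with the roles of the two frames interchanged: reconstruct $x$ via $(\{f_\alpha\}_{\alpha\in\Omega},\{\tau_\alpha\}_{\alpha\in\Omega})$, apply $g_\beta$, and run the identical H\"older estimate with $\sup_{\alpha\in\Omega,\beta\in\Delta}|g_\beta(\tau_\alpha)|$ in place of $M$. I do not expect a genuine obstacle here; the only points that need care are bookkeeping ones — that the supports $S$ and $T$, being defined only up to $\mu$- resp.\ $\nu$-null sets, may be inserted freely into the integrals, and that the interchange $f_\alpha(\int_T g_\beta(x)\omega_\beta\,d\nu)=\int_T g_\beta(x)f_\alpha(\omega_\beta)\,d\nu$ is legitimate, which is precisely condition (ii) in the definition of the Pettis integral. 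One should also record that $M>0$ automatically, since otherwise the displayed identity for $f_\alpha(x)$ would force $x=0$; thus $1/M$ is well defined.
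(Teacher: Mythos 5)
Your proposal is correct and follows essentially the same route as the paper: reconstruct $x$ via the other frame, pass $f_\alpha$ through the Pettis integral, bound by $\sup|f_\alpha(\omega_\beta)|$ together with H\"older's inequality to produce $\nu(\operatorname{supp}(\theta_g x))^{1/q}$, and then integrate the $p$-th power over $\operatorname{supp}(\theta_f x)$ and use both isometries. The only differences are cosmetic (you apply H\"older to the product $|g_\beta(x)|\,|f_\alpha(\omega_\beta)|$ before extracting the supremum, whereas the paper extracts it first and pairs $|g_\beta(x)|$ with the constant $1$) plus your slightly more careful treatment of the degenerate cases, which the paper leaves implicit.
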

\begin{proof}
	Let $x \in \mathcal{X}\setminus\{0\}$ and $q$ be the conjugate index of $p$. First using $\theta_f$ is an isometry and later using $\theta_g$ is an isometry, we get 
	
	\begin{align*}
	\|x\|^p&=\|\theta_fx\|^p=\int\limits_{\Omega}|f_\alpha (x)|^p\, d\mu(\alpha)=\int\limits_{ \operatorname{supp}(\theta_fx)}|f_\alpha(x)|^p\, d\mu(\alpha)\\
	&=\int\limits_{\operatorname{supp}(\theta_fx)}\left|f_\alpha\left(\int\limits_{\Delta}g_\beta(x)\omega_\beta\, d\nu(\beta)\right)\right|^p\, d\mu(\alpha)=\int\limits_{\operatorname{supp}(\theta_fx)}\left|\int\limits_{\Delta}g_\beta(x)f_\alpha(\omega_\beta)\, d\nu(\beta)\right|^p\, d\mu(\alpha)\\
	&=\int\limits_{\operatorname{supp}(\theta_fx)}\left|\int\limits_{ \operatorname{supp}(\theta_gx)}g_\beta(x)f_\alpha(\omega_\beta)\, d\nu(\beta)\right|^p\, d\mu(\alpha)\leq \int\limits_{ \operatorname{supp}(\theta_fx)}\left(\int\limits_{ \operatorname{supp}(\theta_gx)}|g_\beta(x)f_\alpha(\omega_\beta)|\, d\nu(\beta)\right)^p\, d\mu(\alpha)\\
	&\leq \left(\displaystyle\sup_{\alpha \in \Omega , \beta \in \Delta}|f_\alpha(\omega_\beta)|\right)^p\int\limits_{ \operatorname{supp}(\theta_fx)}\left(\int\limits_{\operatorname{supp}(\theta_gx)}|g_\beta(x)|\, d\nu(\beta)\right)^p\, d\mu(\alpha)\\
	&=\left(\displaystyle\sup_{\alpha \in \Omega , \beta \in \Delta}|f_\alpha(\omega_\beta)|\right)^p 	\mu(\operatorname{supp}(\theta_f x))\left(\int\limits_{\operatorname{supp}(\theta_gx)}|g_\beta(x)|\, d\nu(\beta)\right)^p\\
	&\leq \left(\displaystyle\sup_{\alpha \in \Omega , \beta \in \Delta}|f_\alpha(\omega_\beta)|\right)^p 	\mu(\operatorname{supp}(\theta_f x))\left(\int\limits_{\operatorname{supp}(\theta_gx)}|g_\beta(x)|^p\, d\nu(\beta)\right)^\frac{p}{p}\left(\int\limits_{ \operatorname{supp}(\theta_gx)}1^q\, d\nu(\beta)\right)^\frac{p}{q}\\
	&=\left(\displaystyle\sup_{\alpha \in \Omega , \beta \in \Delta}|f_\alpha(\omega_\beta)|\right)^p	\mu(\operatorname{supp}(\theta_f x))\|\theta_gx\|^p	\nu(\operatorname{supp}(\theta_g x))^\frac{p}{q}\\
	&=\left(\displaystyle\sup_{\alpha \in \Omega , \beta \in \Delta}|f_\alpha(\omega_\beta)|\right)^p 	\mu(\operatorname{supp}(\theta_f x))\|x\|^p\nu(\operatorname{supp}(\theta_g x))^\frac{p}{q}.	
	\end{align*}
Therefore 
\begin{align*}
	\frac{1}{\displaystyle\sup_{\alpha \in \Omega , \beta \in \Delta}|f_\alpha(\omega_\beta)|}\leq 	\mu(\operatorname{supp}(\theta_f x))^\frac{1}{p}\nu(\operatorname{supp}(\theta_g x))^\frac{1}{q}.
\end{align*}
On the other way, first using $\theta_g$ is an isometry and $\theta_f$ is an isometry, we get

\begin{align*}
	\|x\|^p&=\|\theta_gx\|^p=\int\limits_{\Delta}|g_\beta(x)|^p\, d\nu(\beta)=\int\limits_{ \operatorname{supp}(\theta_gx)}|g_\beta(x)|^p\, d\nu(\beta)\\
	&=\int\limits_{\operatorname{supp}(\theta_gx)}\left|g_\beta\left(\int\limits_{\Omega}f_\alpha(x)\tau_\alpha\, d\mu(\alpha)\right)\right|^p\, d\nu(\beta)
	=\int\limits_{\operatorname{supp}(\theta_gx)}\left|\int\limits_{\Omega}f_\alpha(x)g_\beta(\tau_\alpha)\, d\mu(\alpha)\right|^p\, d\nu(\beta)\\
	&=\int\limits_{\operatorname{supp}(\theta_gx)}\left|\int\limits_{ \operatorname{supp}(\theta_fx)}f_\alpha(x)g_\beta(\tau_\alpha)\, d\mu(\alpha)\right|^p\, d\nu(\beta)\leq \int\limits_{ \operatorname{supp}(\theta_gx)}\left(\int\limits_{ \operatorname{supp}(\theta_fx)}|f_\alpha(x)g_\beta(\tau_\alpha)|\, d\mu(\alpha)\right)^p\, d\nu(\beta)\\
	&\leq \left(\displaystyle\sup_{\alpha \in \Omega , \beta \in \Delta}|g_\beta(\tau_\alpha)|\right)^p\int\limits_{ \operatorname{supp}(\theta_gx)}\left(\int\limits_{ \operatorname{supp}(\theta_fx)}|f_\alpha(x)|\, d\mu(\alpha)\right)^p\, d\nu(\beta)\\
	&=\left(\displaystyle\sup_{\alpha \in \Omega , \beta \in \Delta}|g_\beta(\tau_\alpha)|\right)^p\nu(\operatorname{supp}(\theta_g x))\left(\int\limits_{ \operatorname{supp}(\theta_fx)}|f_\alpha(x)|\, d\mu(\alpha)\right)^p\\
	&\leq \left(\displaystyle\sup_{\alpha \in \Omega , \beta \in \Delta}|g_\beta(\tau_\alpha)|\right)^p\nu(\operatorname{supp}(\theta_g x))\left(\int\limits_{ \operatorname{supp}(\theta_fx)}|f_\alpha(x)|^p\, d\mu(\alpha)\right)^\frac{p}{p}\left(\int\limits_{ \operatorname{supp}(\theta_fx)}1^q\, d\mu(\alpha)\right)^\frac{p}{q}\\
	&=\left(\displaystyle\sup_{\alpha \in \Omega , \beta \in \Delta}|g_\beta(\tau_\alpha)|\right)^p\nu(\operatorname{supp}(\theta_g x))\|\theta_f x\|^p\mu(\operatorname{supp}(\theta_f x))^\frac{p}{q}\\
	&=\left(\displaystyle\sup_{\alpha \in \Omega , \beta \in \Delta}|g_\beta(\tau_\alpha)|\right)^p\nu(\operatorname{supp}(\theta_g x))\|x\|^p\mu(\operatorname{supp}(\theta_f x))^\frac{p}{q}.
\end{align*}
Therefore 
\begin{align*}
	\frac{1}{\displaystyle\sup_{\alpha \in \Omega , \beta \in \Delta}|g_\beta(\tau_\alpha)|}\leq \nu(\operatorname{supp}(\theta_g x))^\frac{1}{p}\mu(\operatorname{supp}(\theta_f x))^\frac{1}{q}.
\end{align*}	
\end{proof}
\begin{corollary}
Let $(\Omega, \mu)$,  $(\Delta, \nu)$ be   measure spaces. Let  $ \{\tau_\alpha\}_{\alpha\in \Omega}$  and   $ \{\omega_\beta\}_{\beta\in \Delta}$ 	   be Parseval continuous frames for a Hilbert space $\mathcal{H}$.  Then for every $h \in \mathcal{H}\setminus\{0\}$,  we have
\begin{align*}
	\mu(\operatorname{supp}(\theta_\tau h))	\nu(\operatorname{supp}(\theta_\omega h)) \geq 	\frac{1}{\displaystyle\sup_{\alpha \in \Omega, \beta \in \Delta}|\langle \omega_\beta, \tau_\alpha \rangle |^2}, \quad  	\nu(\operatorname{supp}(\theta_\omega h))	\mu(\operatorname{supp}(\theta_\tau  h))\geq \frac{1}{\displaystyle\sup_{\alpha \in \Omega , \beta \in \Delta}|\langle \tau_\alpha, \omega_\beta \rangle |^2},	
\end{align*}
where 
\begin{align*}
	&\theta_\tau: \mathcal{H} \ni h \mapsto \theta_\tau h \in \mathcal{L}^2(\Omega, \mu); \quad   \theta_\tau h: \Omega \ni \alpha \mapsto  (\theta_\tau h) (\alpha)\coloneqq \langle h, \tau _\alpha\rangle  \in \mathbb{K},\\
	&\theta_\omega: \mathcal{H} \ni h \mapsto \theta_\omega h \in \mathcal{L}^2(\Delta, \nu); \quad   \theta_\omega h: \Delta \ni \beta \mapsto  (\theta_\omega h) (\beta)\coloneqq \langle h, \omega_\beta \rangle  \in \mathbb{K}.
\end{align*}
\end{corollary}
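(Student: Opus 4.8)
The plan is to deduce this corollary directly from Theorem \ref{CFDS} by specializing to $p=q=2$ and realizing the two Parseval continuous frames as continuous $2$-Schauder frames in the sense of Definition \ref{CPSF}.

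First I would set, for each $\alpha\in\Omega$ and each $\beta\in\Delta$, the functionals $f_\alpha\coloneqq\langle\cdot,\tau_\alpha\rangle$ and $g_\beta\coloneqq\langle\cdot,\omega_\beta\rangle$, which lie in $\mathcal{H}^*$. Under the Riesz identification $\mathcal{H}^*\cong\mathcal{H}$ one has $\theta_f=\theta_\tau$ and $\theta_g=\theta_\omega$, with values in $\mathcal{L}^2(\Omega,\mu)$ and $\mathcal{L}^2(\Delta,\nu)$ respectively. Next I would check the four conditions of Definition \ref{CPSF} for the pair $(\{f_\alpha\}_{\alpha\in\Omega},\{\tau_\alpha\}_{\alpha\in\Omega})$: condition (i), measurability of $\alpha\mapsto\langle h,\tau_\alpha\rangle$, is part of the hypothesis that $\{\tau_\alpha\}_{\alpha\in\Omega}$ is a continuous frame; condition (ii) is exactly the Parseval identity $\|h\|^2=\int_\Omega|\langle h,\tau_\alpha\rangle|^2\,d\mu(\alpha)$; condition (iii) holds because for every $y\in\mathcal{H}$ the map $\alpha\mapsto\langle\,\langle h,\tau_\alpha\rangle\tau_\alpha,\,y\,\rangle=\langle h,\tau_\alpha\rangle\,\overline{\langle y,\tau_\alpha\rangle}$ is a product of two measurable scalar functions; and condition (iv), the weak reconstruction $h=\int_\Omega\langle h,\tau_\alpha\rangle\tau_\alpha\,d\mu(\alpha)$, follows by polarizing the Parseval identity to get $\langle h,y\rangle=\int_\Omega\langle h,\tau_\alpha\rangle\,\overline{\langle y,\tau_\alpha\rangle}\,d\mu(\alpha)$ for all $y\in\mathcal{H}$, i.e.\ the frame operator of a Parseval continuous frame is the identity. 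The same four verifications apply verbatim to $(\{g_\beta\}_{\beta\in\Delta},\{\omega_\beta\}_{\beta\in\Delta})$.

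Then I would invoke Theorem \ref{CFDS} with $p=q=2$. Since $f_\alpha(\omega_\beta)=\langle\omega_\beta,\tau_\alpha\rangle$ and $g_\beta(\tau_\alpha)=\langle\tau_\alpha,\omega_\beta\rangle$, Inequality (\ref{CUP}) specializes to
\begin{align*}
\mu(\operatorname{supp}(\theta_\tau h))^{\frac12}\,\nu(\operatorname{supp}(\theta_\omega h))^{\frac12}\geq\frac{1}{\displaystyle\sup_{\alpha\in\Omega,\beta\in\Delta}|\langle\omega_\beta,\tau_\alpha\rangle|},\quad \nu(\operatorname{supp}(\theta_\omega h))^{\frac12}\,\mu(\operatorname{supp}(\theta_\tau h))^{\frac12}\geq\frac{1}{\displaystyle\sup_{\alpha\in\Omega,\beta\in\Delta}|\langle\tau_\alpha,\omega_\beta\rangle|},
\end{align*}
and squaring both inequalities gives exactly the asserted bounds.

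The only point requiring a little care — the "main obstacle", such as it is — is justifying that the weak (Pettis) reconstruction $h=\int_\Omega\langle h,\tau_\alpha\rangle\tau_\alpha\,d\mu(\alpha)$ genuinely holds for an abstract Parseval continuous frame, i.e.\ that the weak integral defining the frame operator exists and equals the identity, together with the attendant weak measurability needed in condition (iii). This is standard in continuous frame theory and can be cited, after which the corollary is a pure substitution into Theorem \ref{CFDS}.
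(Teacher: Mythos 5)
Your proposal is correct and is exactly the argument the paper intends: the corollary is the $p=q=2$ specialization of Theorem \ref{CFDS}, obtained by viewing each Parseval continuous frame as a continuous $2$-Schauder frame via $f_\alpha=\langle\cdot,\tau_\alpha\rangle$, $g_\beta=\langle\cdot,\omega_\beta\rangle$ (the Parseval identity giving the isometry condition and the frame operator being the identity giving weak reconstruction), and then squaring the resulting inequalities. The paper leaves this verification implicit, so your write-up, including the care about the Pettis-integral reconstruction, matches and slightly elaborates the intended proof.
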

\begin{corollary}
	Theorem \ref{FDS} follows from Theorem \ref{CFDS}.	
\end{corollary}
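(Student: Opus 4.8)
The plan is to specialize Theorem~\ref{CFDS} to discrete index sets carrying counting measure. Starting from p-Schauder frames $(\{f_j\}_{j=1}^n, \{\tau_j\}_{j=1}^n)$ and $(\{g_k\}_{k=1}^m, \{\omega_k\}_{k=1}^m)$ for a finite dimensional Banach space $\mathcal{X}$ as in Theorem~\ref{FDS}, I would put $\Omega = \{1, \dots, n\}$ and $\Delta = \{1, \dots, m\}$, and let $\mu$, $\nu$ be the counting measures on $\Omega$ and $\Delta$ respectively, relabelling $f_\alpha := f_j$, $\tau_\alpha := \tau_j$, $g_\beta := g_k$, $\omega_\beta := \omega_k$.

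The first step is to check that $(\{f_\alpha\}_{\alpha \in \Omega}, \{\tau_\alpha\}_{\alpha \in \Omega})$ and $(\{g_\beta\}_{\beta \in \Delta}, \{\omega_\beta\}_{\beta \in \Delta})$ are continuous p-Schauder frames in the sense of Definition~\ref{CPSF}. Conditions (i) and (iii) hold automatically, since every function on a discrete space is measurable and weakly measurable. For (ii), integration against counting measure is the finite sum $\sum_{j=1}^n |f_j(x)|^p$, which equals $\|x\|^p$ because $(\{f_j\}_{j=1}^n, \{\tau_j\}_{j=1}^n)$ is a p-Schauder frame; similarly for $g$. For (iv), the Pettis integral of $\alpha \mapsto f_\alpha(x)\tau_\alpha$ against counting measure is the finite sum $\sum_{j=1}^n f_j(x)\tau_j$ — a finite sum in a Banach space is trivially its own weak integral — and this equals $x$ by the reconstruction property; again similarly for $g$.

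The second step is to match the quantities appearing in \eqref{CUP} with those in \eqref{FDSUNCE}. Under counting measure, $\mu(\operatorname{supp}(\theta_f x))$ is exactly the cardinality of $\{j : f_j(x) \neq 0\}$, i.e.\ $\|\theta_f x\|_0$, and likewise $\nu(\operatorname{supp}(\theta_g x)) = \|\theta_g x\|_0$. Since $\Omega \times \Delta$ is finite, $\sup_{\alpha \in \Omega, \beta \in \Delta}|f_\alpha(\omega_\beta)| = \max_{1\le j\le n,\, 1\le k\le m}|f_j(\omega_k)|$ and $\sup_{\alpha \in \Omega, \beta \in \Delta}|g_\beta(\tau_\alpha)| = \max_{1\le j\le n,\, 1\le k\le m}|g_k(\tau_j)|$. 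Substituting these identifications into the two inequalities of \eqref{CUP} produces precisely the two inequalities of \eqref{FDSUNCE}, for every $x \in \mathcal{X}\setminus\{0\}$.

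I do not expect any genuine obstacle; the only point that deserves a word of care is the verification of the weak-measurability and Pettis-integrability clauses of Definition~\ref{CPSF} in the discrete finite setting, and this is immediate because every integral involved collapses to a finite sum. Note also that finite dimensionality of $\mathcal{X}$ is used only implicitly in Theorem~\ref{FDS}'s hypotheses and plays no role in the reduction itself.
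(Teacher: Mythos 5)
Your proposal is correct and follows exactly the route the paper takes: take $\Omega=\{1,\dots,n\}$, $\Delta=\{1,\dots,m\}$ with counting measures and apply Theorem \ref{CFDS}. In fact you spell out the verification of Definition \ref{CPSF} and the identification $\mu(\operatorname{supp}(\theta_f x))=\|\theta_f x\|_0$, $\sup=\max$, which the paper leaves implicit.
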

\begin{proof}
	Let $(\{f_j\}_{j=1}^n, \{\tau_j\}_{j=1}^n)$ and $(\{g_k\}_{k=1}^m, \{\omega_k\}_{k=1}^m)$ be  p-Schauder frames  for a finite dimensional Banach space $\mathcal{X}$. Define	$\Omega\coloneqq \{1, \dots, n\}$ and  $\Delta\coloneqq \{1, \dots, m\}$.  Take $\mu$ as counting measure on $\Omega$ and $\nu$ as counting measure on $\Delta$.
\end{proof}

Theorem  \ref{CFDS}  brings the following question.
\begin{question}
	\begin{enumerate}[\upshape(i)]
		\item 
	 Let $(\Omega, \mu)$,  $(\Delta, \nu)$ be   measure spaces,    $\mathcal{X}$ be a Banach space and $p>1$. For which pairs of continuous p-Schauder frames $(\{f_\alpha\}_{\alpha\in \Omega}, \{\tau_\alpha\}_{\alpha\in \Omega})$  and   $(\{g_\beta\}_{\beta\in \Delta}, \{\omega_\beta\}_{\beta\in \Delta})$ for  $\mathcal{X}$, we have equality in Inequality (\ref{CUP})?
	 \item What is the version of Theorem \ref{CFDS} for $0<p\leq1$ and $p=\infty$?
	 \item If $\mathcal{X}$ is  a Banach space of prime dimension, whether we can improve Theorem  \ref{CFDS} like Terence Tao's  \cite{TAO} (cf. \cite{MURTY}) improvement of Donoho-Stark uncertainty principle? More generally, whether we have  Roy  \cite{MESHULAM} and Murty-Whang theorems \cite{MURTYWHANG} for finite dimensional Banach spaces?
	 	\end{enumerate}
\end{question}
If we can derive the uncertainty principles derived in \cite{KRISHNA3} and  \cite{KRISHNA2} for p-Schauder frames, then we hope that we can get continuous versions of them.

\section{Measure minimization conjecture}
Uncertainty principles have connections with solution of sparse representation problem. The following result is the most important.
\begin{theorem} \cite{DONOHOELAD}  \label{DONOHOELAD} \textbf{(Donoho-Elad Sparsity Theorem)}
	Let $\{\omega_j\}_{j=1}^n$ be a  frame   for a  finite dimensional Hilbert space $\mathcal{H}$. If $h\in\mathcal{H}$ can be written as 
	\begin{align*}
		h=\theta_\tau^*c \quad \text{for some} \quad c\in \mathbb{K}^n \quad \text{with} \quad \|c\|_0<\frac{1}{2}\left(1+\frac{1}{\displaystyle\max_{1\leq j, k \leq n, j\neq k}|\langle\tau_j, \tau_k \rangle|}\right),
	\end{align*} 
then $c$ is the unique solution of the $\ell_0$-minimization problem 
\begin{align*}
(\text{P}_0) \quad \quad	\operatorname{minimize}\{\|d\|_0:d\in \mathbb{K}^n\} \quad \text{subject to} \quad h=\theta_\tau^*d.
\end{align*}
\end{theorem}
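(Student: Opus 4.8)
The statement is classical \cite{DONOHOELAD}; my plan is to reduce uniqueness of the $\ell_0$-minimizer to a coherence lower bound on the \emph{spark} of the system $\{\tau_j\}_{j=1}^{n}$. Write $\mu\coloneqq\max_{1\le j,k\le n,\;j\ne k}|\langle\tau_j,\tau_k\rangle|$, let $T\coloneqq\theta_\tau^{*}\colon\mathbb{K}^{n}\to\mathcal{H}$, $Tc=\sum_{j=1}^{n}c_j\tau_j$, be the synthesis operator, and set $\operatorname{spark}\coloneqq\min\{\|e\|_0:e\in\mathbb{K}^{n}\setminus\{0\},\ Te=0\}$. The whole argument rests on the inequality $\operatorname{spark}\ge 1+\tfrac1\mu$, after which the conclusion is pure counting with $\|\cdot\|_0$.

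First I would prove that bound. Take $e\ne0$ with $Te=0$, put $S\coloneqq\operatorname{supp}(e)$ and $k\coloneqq|S|$. Pairing $\sum_{j\in S}e_j\tau_j=0$ with $\tau_\ell$ for each $\ell\in S$ gives $A_S\,e_S=0$, where $A_S=(\langle\tau_j,\tau_\ell\rangle)_{j,\ell\in S}$ is the Gram submatrix and $e_S$ is the restriction of $e$ to $S$; hence $A_S$ is singular. Normalizing the frame vectors to unit norm as in \cite{DONOHOELAD} (which changes neither supports nor synthesized vectors), $A_S$ has all diagonal entries equal to $1$ and all off-diagonal entries of modulus at most $\mu$, so by Gershgorin's circle theorem every eigenvalue of $A_S$ lies within distance $(k-1)\mu$ of $1$. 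If $(k-1)\mu<1$ then $0$ is not an eigenvalue, contradicting singularity; therefore $k-1\ge\tfrac1\mu$, i.e.\ $\operatorname{spark}\ge 1+\tfrac1\mu$.

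For uniqueness, let $d\in\mathbb{K}^{n}$ be any solution of $(\mathrm{P}_0)$. Since $c$ is feasible, $\|d\|_0\le\|c\|_0$; and since $Td=h=Tc$, the vector $e\coloneqq c-d$ lies in $\ker T$ with $\|e\|_0\le\|c\|_0+\|d\|_0\le 2\|c\|_0<1+\tfrac1\mu\le\operatorname{spark}$. By the previous paragraph a nonzero element of $\ker T$ has at least $\operatorname{spark}$ nonzero entries, so $e=0$ and $d=c$. In particular $c$ itself is a minimizer and is the unique one.

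The only step carrying genuine content is the spark estimate, and inside it the single delicate point is the normalization: the clean Gershgorin argument needs the frame vectors to have (uniformly comparable) norms, so one should either verify that passing to $\tau_j/\|\tau_j\|$, $c_j\mapsto c_j\|\tau_j\|$ is harmless here, or read the hypotheses of Theorem \ref{DONOHOELAD} with the unit-norm convention of \cite{DONOHOELAD} in force. Everything else — feasibility of $c$, existence of a minimizer, and the additivity $\|c-d\|_0\le\|c\|_0+\|d\|_0$ — is routine.
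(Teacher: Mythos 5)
The paper does not prove this statement at all; it is quoted from \cite{DONOHOELAD} purely as motivation for the Measure Minimization Conjecture, so there is no internal proof to compare against. Your argument is the standard one from the original reference and it is correct: the Gershgorin estimate on the Gram submatrix gives $\operatorname{spark}\ge 1+\tfrac1\mu$, and the counting step $\|c-d\|_0\le\|c\|_0+\|d\|_0<1+\tfrac1\mu$ for any competing feasible minimizer $d$ forces $c-d\in\ker\theta_\tau^{*}$ to vanish; existence of a minimizer is automatic since $\|\cdot\|_0$ takes finitely many values and $c$ is feasible. Your caveat about normalization is also well taken and worth keeping explicit: as transcribed in the paper the hypothesis uses the unnormalized quantity $\max_{j\ne k}|\langle\tau_j,\tau_k\rangle|$ (and even switches between $\omega_j$ and $\tau_j$), whereas the Gershgorin step needs unit-norm vectors or the normalized coherence $|\langle\tau_j,\tau_k\rangle|/(\|\tau_j\|\,\|\tau_k\|)$; note that your proposed rescaling $\tau_j\mapsto\tau_j/\|\tau_j\|$, $c_j\mapsto c_j\|\tau_j\|$ changes the coherence, so it does not by itself repair the unnormalized statement --- the correct reading is, as you say, that the unit-norm convention of \cite{DONOHOELAD} is in force.
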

Based   on  Theorem \ref{DONOHOELAD} and  our uncertainty principle (Theorem \ref{CFDS}) we make the following conjecture.
\begin{conjecture}
\textbf{(Measure Minimization Conjecture)		Let $\{\tau_\alpha\}_{\alpha\in \Omega}$ be a  continuous frame   for a   Hilbert space $\mathcal{H}$. If $h\in\mathcal{H}$ can be written as 
\begin{align*}
	h=\theta_\tau^*f \quad \text{for some} \quad f\in \mathcal{L}^2(\Omega, \mu) \quad \text{with} \quad \mu(\operatorname{supp}(f))<\frac{1}{2}\left(1+\frac{1}{\displaystyle\sup_{\alpha, \beta  \in \Omega , \alpha \neq \beta}|\langle \tau_\alpha, \tau_\beta \rangle |}\right),
\end{align*} 
then $f$ is the unique solution of the measure minimization problem 
\begin{align*}
(\text{P}_\text{M}) \quad \quad 	\operatorname{minimize}\{\mu(\operatorname{supp}(g)):g\in \mathcal{L}^2(\Omega, \mu)\} \quad \text{subject to} \quad h=\theta_\tau^*g.
\end{align*}}
\end{conjecture}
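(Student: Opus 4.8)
The plan is to transplant the proof of the Donoho--Elad Sparsity Theorem (Theorem \ref{DONOHOELAD}) to the continuous setting, replacing the cardinality $\|\cdot\|_0$ of supports by the $\mu$-measure of supports, and then to isolate the one place where passing from the counting measure to a general $\mu$ is genuinely delicate. \textbf{Step 1: a ``continuous spark'' estimate.} I would first prove that every nonzero element of $\ker\theta_\tau^*$ has large support. So let $e\in\mathcal{L}^2(\Omega,\mu)\setminus\{0\}$ with $\theta_\tau^* e=\int_\Omega e(\alpha)\tau_\alpha\,d\mu(\alpha)=0$, and write $M:=\sup_{\alpha,\beta\in\Omega,\ \alpha\neq\beta}|\langle\tau_\alpha,\tau_\beta\rangle|$. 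Pairing the identity $\theta_\tau^* e=0$ with $\tau_\gamma$ gives $\int_\Omega e(\alpha)\langle\tau_\alpha,\tau_\gamma\rangle\,d\mu(\alpha)=0$ for every $\gamma$, and separating the contribution of the point $\gamma$ yields
\begin{align*}
	\mu(\{\gamma\})\,e(\gamma)\,\|\tau_\gamma\|^2 \ =\ -\int\limits_{\Omega\setminus\{\gamma\}} e(\alpha)\langle\tau_\alpha,\tau_\gamma\rangle\,d\mu(\alpha).
\end{align*}
Assuming the normalisation $\|\tau_\gamma\|=1$ ($\mu$-a.e.), choosing $\gamma$ in $\operatorname{supp}(e)$ with $|e(\gamma)|$ within $\varepsilon$ of the essential supremum $S$ of $|e|$ on $\operatorname{supp}(e)$ (an approximation is needed, since $S$ need not be attained), and bounding the right-hand side by $M\,S\,\mu(\operatorname{supp}(e)\setminus\{\gamma\})$, one gets $\mu(\{\gamma\})(S-\varepsilon)\le M\,S\,\mu(\operatorname{supp}(e)\setminus\{\gamma\})$; letting $\varepsilon\to0$ and using subadditivity,
\begin{align*}
	\mu(\operatorname{supp}(e)) \ \ge\ \mu(\{\gamma\}) + \mu(\operatorname{supp}(e)\setminus\{\gamma\}) \ \ge\ \mu(\{\gamma\})\Big(1 + \tfrac1M\Big).
\end{align*}
When $\mu$ is the counting measure this is exactly the Donoho--Elad spark bound $\mu(\operatorname{supp}(e))\ge 1+1/M$.

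\textbf{Step 2: the factor-of-two argument.} Suppose $h=\theta_\tau^* f$ with $\mu(\operatorname{supp}(f))<\tfrac12(1+1/M)$, and let $g\in\mathcal{L}^2(\Omega,\mu)$ be any feasible point with $g\neq f$ and $\theta_\tau^* g=h$. Put $e:=f-g$; then $e\neq0$, $\theta_\tau^* e=0$, and $\operatorname{supp}(e)\subseteq\operatorname{supp}(f)\cup\operatorname{supp}(g)$, so by subadditivity of $\mu$ and Step 1,
\begin{align*}
	\mu(\operatorname{supp}(f)) + \mu(\operatorname{supp}(g)) \ \ge\ \mu(\operatorname{supp}(f)\cup\operatorname{supp}(g)) \ \ge\ \mu(\operatorname{supp}(e)) \ \ge\ 1 + \frac1M.
\end{align*}
Hence $\mu(\operatorname{supp}(g))>\tfrac12(1+1/M)>\mu(\operatorname{supp}(f))$, so $f$ has strictly smaller support than any competing feasible point, and therefore $f$ is the unique solution of $(\mathrm{P}_\mathrm{M})$.

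\textbf{Main obstacle.} Step 2 is routine, so the entire difficulty sits in Step 1, and in fact in the additive ``$1$'' appearing there. If $\mu$ has an atomic part on which the relevant near-maximiser of $|e|$ can be found, the separation-of-the-point step carries real content and the argument closes (recovering Theorem \ref{DONOHOELAD} for the counting measure). But if $\mu$ is non-atomic, then $\mu(\{\gamma\})=0$ and the displayed identity degenerates to a triviality; the natural substitute---that $e$ lies in the kernel of the integral operator on $\mathcal{L}^2(\operatorname{supp}(e))$ with kernel $\langle\tau_\alpha,\tau_\gamma\rangle$---lacks the ``$I+{}$'' structure that forced the Gram block to have operator norm $\ge1$ in the discrete proof, and yields at best $\mu(\operatorname{supp}(e))\ge 1/M$, which is weaker than the conjectured bound. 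So I expect the proposal to prove the conjecture outright for purely atomic $\mu$, while the non-atomic (and mixed) cases will require an extra structural hypothesis on the frame, or a new mechanism replacing the role played by the Gram diagonal; making $\theta_\tau^*$ well defined on all of $\mathcal{L}^2(\Omega,\mu)$, handling supports of infinite measure, and making the essential-supremum approximation rigorous are further, lesser, technical points.
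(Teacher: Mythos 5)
First, note that the paper does not prove this statement at all: it is posed explicitly as an open conjecture (motivated by the Donoho--Elad Sparsity Theorem and Theorem \ref{CFDS}), so there is no proof in the paper to compare yours against. Your submission is therefore being judged on whether it settles the conjecture, and by your own admission it does not: the whole burden is in Step 1, and there your argument only yields the conjectured spark-type bound in essentially the discrete situation. Concretely, separating the atom $\gamma$ gives $\mu(\operatorname{supp}(e))\geq \mu(\{\gamma\})\left(1+\tfrac{1}{M}\right)$, whereas Step 2 needs $\mu(\operatorname{supp}(e))\geq 1+\tfrac{1}{M}$. These coincide only when the atom you are forced to use (a near-maximiser of $|e|$, not an atom of your choosing) has mass at least $1$, as for the counting measure. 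So your closing claim that the argument proves the conjecture ``outright for purely atomic $\mu$'' is overstated: for atoms of mass less than $1$ the constant degrades and Step 2 does not close. For non-atomic $\mu$ the argument collapses entirely, as you correctly diagnose: $\mu(\{\gamma\})=0$ kills the identity, and there is a further structural problem you should make explicit --- the quantity $M=\sup_{\alpha\neq\beta}|\langle\tau_\alpha,\tau_\beta\rangle|$ ignores only a $\mu\times\mu$-null set of the diagonal, so for a continuous frame with any continuity in $\alpha$ the near-diagonal values push $M$ up towards $\|\tau_\alpha\|^2$, and the conjectured threshold $\tfrac12(1+1/M)$ carries little force; this suggests the conjecture itself may need reformulation (e.g.\ an essential-supremum coherence off a neighbourhood of the diagonal) rather than just a better proof.

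There are also smaller unfinished points you flagged but did not resolve, and which a complete write-up must handle: $\theta_\tau^*$ is only defined via the frame structure on $\mathcal{L}^2(\Omega,\mu)$, and the constraint $h=\theta_\tau^*g$ must be interpreted weakly; $e=f-g\in\mathcal{L}^2$ need not be essentially bounded, so the ``within $\varepsilon$ of the essential supremum'' device needs either $\mu(\operatorname{supp}(e))<\infty$ (which you may assume for competitors $g$, but must say so) together with a Cauchy--Schwarz bound, or a different normalisation; and the normalisation $\|\tau_\gamma\|=1$ $\mu$-a.e.\ is an added hypothesis not present in the conjecture. In summary: Step 2 is correct and routine, Step 1 recovers Theorem \ref{DONOHOELAD} for the counting measure but does not prove the stated conjecture, and the conjecture remains open --- which is consistent with the paper, where it is left as a conjecture.
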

It is known that Problem (P$_0$) is NP-Hard \cite{FOUCARTRAUHUT, NATARAJAN}. We therefore have the following problem.
\begin{problem}
\textbf{For which measure spaces $(\Omega, \mu)$, the Problem ($\text{P}_\text{M}$) is NP-Hard?}
\end{problem}

\section{Acknowledgments}
Author thanks Prof. Philip B. Stark, Department of Statistics, University of California, Berkeley, USA for asking Question \ref{SP}. Author believes that this paper exists because of Question \ref{SP}.

 \bibliographystyle{plain}
 \bibliography{reference.bib}

\end{document}